\def\p{\partial}
\def\R{\mathbb R}
\def\P{\mathbb P}
\def \mcT{\mathcal{T}_h}
\DeclarePairedDelimiter\norm{\lVert}{\rVert}
\DeclarePairedDelimiter{\seminorm}{[}{]}
\DeclarePairedDelimiter{\jump}{\llbracket}{\rrbracket}
\DeclarePairedDelimiter{\tnorm}{\interleave}{\interleave}
\date{Compiled \today}
\newcommand{\VN}{\mathcal{V}_N}
\providecommand\@dotsep{5}\def\listtodoname{List of Todos}\def\listoftodos{\hypersetup{linkcolor=black}\@starttoc{tdo}\listtodoname\hypersetup{linkcolor=blue}}\makeatother
\newtheorem{theorem}{Theorem}[section]
\newtheorem{lemma}[theorem]{Lemma}
\newtheorem{proposition}[theorem]{Proposition}
\theoremstyle{definition}
\theoremstyle{remark}
\newtheorem{remark}[theorem]{Remark}
\title[FEM for UC with finite dimensional trace]{finite element approximation of unique continuation of functions with finite dimensional trace}
\author[E. Burman]{Erik Burman}
\address{Department of Mathematics, 
University College London, 
Gower Street, London UK, WC1E 6BT.}
\email{e.burman@ucl.ac.uk}
\author[L. Oksanen]{Lauri Oksanen}
\address{Department of Mathematics and Statistics, University of Helsinki, P.O 68, 00014 University of Helsinki, Finland}
\email{l.oksanen@ucl.ac.uk}
\subjclass[2010]{Primary: 65N21, 35J15, 65N12, 65N20, 65N30}
\keywords{Unique continuation, finite element method, Lipschitz stability, stabilized methods, error estimates}
\begin{document}
\begin{abstract}
We consider a unique continuation problem where the Dirichlet trace of the solution is known to have finite dimension. We prove Lipschitz stability of the unique continuation problem and design a finite element method that exploits the finite dimensionality to enhance stability. Optimal a priori and a posteriori error estimates are shown for the method. The extension to problems where the trace is not in a finite dimensional space, but can be approximated to high accuracy using finite dimensional functions is discussed. Finally, the theory is illustrated in some numerical examples.
\end{abstract}
\maketitle
\ifoptionfinal{}{
}
\section{Introduction}

Unique continuation (UC), broadly speaking is the unique extension of a function from some domain $\omega$ to a larger domain $\Omega$ subject to it being the solution of a given elliptic partial differential equation. It has applications in data assimilation, inverse problems and control. In spite of the many applications computational UC has received comparatively little attention from the numerical analysis community. 

After a suitable regularization on the continuous level, the UC problem is well-posed and can, in principle, be handled using standard methods, however the approximation accuracy will be restricted by the regularization error. The first results on computational methods for UC typically considered the elliptic Cauchy problem, where the continuation is made from a boundary where both Dirichlet and Neumann data are known into the domain, subject to a second order elliptic operator. Early work focused on rewriting the problem as a boundary integral \cite{CM79, IYH91}, while the earliest finite element reference appears to be \cite{FM86}. The dominating regularization techniques are Tikhonov regularization \cite{TA77} and quasi reversibility \cite{LL69}. Computational methods for the discretization of the regularized problem have been proposed in \cite{RHD99,Bou05, DHH13,BR18, BC20}. 

Recently stabilization techniques introduced for well-posed, but numerically unstable problems \cite{DD76, HFB86, BH04}, were applied to the unique continuation problem in a discretize first, then optimize framework \cite{Bu14, Bu16, BHL18, BO18}.
The stabilization techniques allow for the design of consistent regularization methods, and the upshot is that the 
so-called conditional stability estimates \cite{alessandrini2009} for the UC can be shown to lead to error estimates that reflect the approximation order of the space and the (optimal) stability properties of the ill-posed problem. In particular quantitative a priori error estimates have been derived in a number of situations with careful analysis of the effect of the physical parameters of the problem on the constants of the estimates \cite{BNO19,BNO20,BNO22}. This has lead to a deeper understanding of the computational difficulty of recovering quantities via UC in different parameter regimes. In these works, UC from some internal subset of the bulk was typically considered, avoiding some technical difficulties associated to the Cauchy problem. Nevertheless, the techniques are applicable to either case. This approach is related to previous work on finite element methods for indefinite problems based on least squares minimization in $H^{-1}$ \cite{BLP97,BLP98,BLP01}. More recent results using least squares minimization in dual norm for ill-posed problems can be found in \cite{CIY22, dahmen2022squares}.

Although these methods perform as well as can be expected given the stability of the ill-posed problem, the estimates are not optimal compared to interpolation and conditional sub-Lipschitz stability can in many cases be prohibitively poor for practical purposes. For instance, if $h$ denotes the mesh-parameter, the logarithmic stability of the global UC problem leads to an error bound of the form $|\log(h)|^{-\alpha}$ for some $\alpha \in (0,1)$. On the other hand it has been known from some time in the analysis community that if additional a priori information on the target quantity is added the stability can improve to Lipschitz \cite{AV05,BV06,Sin07,Bou13,LO16}, hinting at the possibility of optimally converging reconstruction methods in this case. Typically the necessary assumption is that the target quantity is of finite dimension $N$. The loss of stability as the dimension becomes large implies that the constant of the stability estimate may grow exponentially in $N$. Even if these results have been known for some time, to the best of our knowledge,  no numerical methods exploiting such additional a priori knowledge exist. The objective of the present work is to present and analyze such a method for the first time. In particular we show that the Lipschitz stability, derived using finite dimensionality, allows us to show that the accuracy of the approximation is optimal compared with interpolation, in spite of the ill-posedness of the problem. 

We will assume that the Dirichlet trace of the function to be approximated by UC is known to reside in a known space of finite dimension. For this situation we derive a Lipschitz stability estimate in norms suitable for numerical analysis (Section \ref{sec:Lip}). We then design a computational method that can leverage the improved stability to enhance the accuracy of the approximation (Section \ref{sec:FEM}). For this method optimal a posteriori and a priori error estimates are proved. As an application we then consider the case where the unknown quantity 
does not lie in the fixed finite dimensional space, but can be approximated to high accuracy in that space, and show that the same bound holds, up to the accuracy of the finite dimensional approximation of the target quantity (Section \ref{sec:Pert}). The case of perturbations in data are also included in this latter case.  We end the paper with some numerical examples validating the theoretical results  (Section \ref{sec:Num}).

\section{Problem setting}
\label{sed:setting}

Let $\Omega \subset \R^2$, be an open, bounded, polygonal domain, and let $\omega \subset \Omega$ be open and nonempty.
We will consider the unique continuation problem of finding a harmonic function $u$ in $\Omega$ given its restriction in $\omega$.
In general, this problem is very unstable, it is only conditionally logarithmically stable \cite{alessandrini2009}. However, if we know, in addition, that 
$u|_{\p \Omega} \in \VN$ where $\VN$ is a subspace of $H^1(\p \Omega)$ that satisfies $\dim(\VN) = N < \infty$,
then the problem becomes Lipschitz stable and thus computationally viable. 
The purpose of this paper is to introduce and analyze a finite element method that realizes this Lipschitz stability numerically.
 
In particular, we will show that the finite element solution $u_h$ converges to the continuum solution with the estimate
    \begin{align}\label{conv_main}
\norm{u - u_h}_{H^1(\Omega)} \le C h^k \norm{D^{k+1} u}_{L^2(\Omega)},
    \end{align}
where $h > 0$ and $k \ge 1$ are the mesh size and polynomial order of the finite element space, see Theorem \ref{th_apriori} for the precise statement. 

\section{Lipschitz stability}\label{sec:Lip}

Following \cite[Def. 1.3.3.2]{grisvard1985} the Lipschitz regularity of $\p \Omega$ allows for defining the Sobolev spaces $H^{s}(\p \Omega)$ for $s \in [-1,1]$.
In view of \cite[Eq. (1.3.3.2)]{grisvard1985} these spaces inherit the properties of the spaces $H^s(\Omega')$ with $\Omega' \subset \R^{n-1}$ a domain.
In particular, the following interpolation inequality holds 
    \begin{align}\label{sobolev_interp}
\norm{u}_{H^{1/2}(\p \Omega)} 
\lesssim 
\epsilon \norm{u}_{H^{1}(\p \Omega)}
+ \epsilon^{-1} \norm{u}_{L^2(\p \Omega)}, \quad u \in H^1(\p \Omega),\ \epsilon > 0,
    \end{align}
see e.g. \cite[Th. 1.4.3.3]{grisvard1985}.
Here $\lesssim$ indicates that we have hidden a constant $C > 0$ that multiplies the right-hand side and that is independent of $u$ and $\epsilon$.

Let $P$ be a projection on $\VN$, and write $Q = 1 - P$ for the complementary one. 
\begin{theorem}
There holds
    \begin{align}\label{continuum_est}
\norm{u}_{H^1(\Omega)} 
\lesssim
\norm{u}_{L^2(\omega)} 
+ \norm{Q u}_{H^{1/2}(\p \Omega)}, 
+ \norm{\Delta u}_{H^{-1}(\Omega)} 
\quad u \in H^1(\Omega).
    \end{align}
\end{theorem}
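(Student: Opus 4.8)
The plan is a compactness--uniqueness argument: assume \eqref{continuum_est} fails, extract a weakly convergent sequence, identify the weak limit via the qualitative unique continuation property of harmonic functions, and then reach a contradiction using the finite dimensionality of $\VN$. Concretely, suppose there were $u_j \in H^1(\Omega)$ with $\norm{u_j}_{H^1(\Omega)} = 1$ while $\norm{u_j}_{L^2(\omega)} + \norm{Qu_j}_{H^{1/2}(\p\Omega)} + \norm{\Delta u_j}_{H^{-1}(\Omega)} \to 0$. After passing to a subsequence I would have $u_j \rightharpoonup u$ weakly in $H^1(\Omega)$; the compact embedding $H^1(\Omega)\hookrightarrow L^2(\Omega)$ then gives $u_j \to u$ strongly in $L^2(\Omega)$, and continuity of the trace operator gives $u_j|_{\p\Omega} \rightharpoonup u|_{\p\Omega}$ weakly in $H^{1/2}(\p\Omega)$.

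Next I would identify the limit $u$. Strong $L^2$-convergence gives $\norm{u}_{L^2(\omega)} = 0$, and weak continuity of $\Delta\colon H^1(\Omega)\to H^{-1}(\Omega)$ gives $\Delta u = 0$; thus $u$ is harmonic in $\Omega$. Since $\Omega$ is a connected domain and $u$ vanishes on the nonempty open set $\omega$, unique continuation --- harmonic functions being real-analytic --- forces $u \equiv 0$ in $\Omega$, and in particular $u_j|_{\p\Omega} \rightharpoonup 0$ in $H^{1/2}(\p\Omega)$.

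It remains to prove $\norm{u_j}_{H^1(\Omega)} \to 0$, contradicting $\norm{u_j}_{H^1(\Omega)} = 1$. I would split $u_j = w_j + v_j$, where $w_j \in H^1_0(\Omega)$ solves $\Delta w_j = \Delta u_j$: by Lax--Milgram $\norm{w_j}_{H^1(\Omega)} \lesssim \norm{\Delta u_j}_{H^{-1}(\Omega)} \to 0$, and $v_j := u_j - w_j$ is harmonic with boundary trace $g_j := u_j|_{\p\Omega}$, so boundedness of the harmonic extension $H^{1/2}(\p\Omega)\to H^1(\Omega)$ yields $\norm{v_j}_{H^1(\Omega)} \lesssim \norm{g_j}_{H^{1/2}(\p\Omega)}$. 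Writing $g_j = Pg_j + Qg_j$, the hypothesis gives $\norm{Qg_j}_{H^{1/2}(\p\Omega)}\to 0$, hence $Pg_j = g_j - Qg_j \rightharpoonup 0$ in $H^{1/2}(\p\Omega)$; since $Pg_j$ lies in the finite dimensional space $\VN$, on which weak and norm convergence coincide, this forces $\norm{Pg_j}_{H^{1/2}(\p\Omega)}\to 0$, so $\norm{g_j}_{H^{1/2}(\p\Omega)}\to 0$ and $\norm{u_j}_{H^1(\Omega)} \le \norm{v_j}_{H^1(\Omega)} + \norm{w_j}_{H^1(\Omega)} \to 0$.

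The genuinely delicate point is this last step: weak $H^1$-convergence of $u_j$ yields only weak $H^{1/2}$-convergence of the traces, which on its own does not control $\norm{v_j}_{H^1(\Omega)}$; it is precisely the finite dimensionality of $\VN$, together with the smallness of $Qu_j$ on $\p\Omega$, that upgrades the trace convergence to norm convergence and closes the argument. A minor technical matter to fix beforehand is that $P$ should be bounded on $H^{1/2}(\p\Omega)$ (true for any natural choice of projection onto $\VN$), which legitimizes $Pg_j \rightharpoonup 0$ and makes $Qg_j = g_j - Pg_j \in H^{1/2}(\p\Omega)$ automatic since $\VN \subset H^1(\p\Omega)$. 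Finally, the constant hidden in $\lesssim$ is a product of operator norms of fixed operators with the norm-equivalence constant on $\VN$, hence independent of $u$ though dependent on $\VN$ and $P$ --- consistent with the loss of stability as $N$ grows.
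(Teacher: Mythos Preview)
Your proof is correct but takes a genuinely different route from the paper's. The paper argues constructively: it first treats the homogeneous case $\Delta w=0$, $Qw=0$, and chains together the energy estimate $\norm{w}_{H^1(\Omega)}\lesssim\norm{Pw}_{H^{1/2}(\p\Omega)}$, a weak trace bound $\norm{w}_{H^{-1/2-\epsilon}(\p\Omega)}\lesssim\norm{w}_{L^2(\Omega)}$ for harmonic functions (from Grisvard), the quantitative logarithmic conditional stability estimate $\norm{w}_{L^2(\Omega)}\le E\,\mu(\eta/E)$ from \cite{alessandrini2009}, and norm equivalence on the finite dimensional $\VN$. This yields $\delta\lesssim\delta\,\mu(\eta/E)$, from which one inverts $\mu$ to get $E\lesssim\eta$; the general case follows by subtracting off a well-posed Dirichlet problem. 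Your argument replaces all of this machinery by a soft compactness--uniqueness argument: Rellich, qualitative unique continuation via real analyticity of harmonic functions, and the fact that weak and strong convergence coincide on $\VN$. The advantage of your route is that it is more elementary---it avoids both the conditional stability literature and the weak trace theorem, and in particular does not rely on the two-dimensional restriction noted in the paper's Remark right after the theorem. The advantage of the paper's route is that it is in principle quantitative: one can in principle trace how the implicit constant depends on the modulus $\mu$ and on the norm-equivalence constant of $\VN$, whereas a contradiction argument says nothing about the constant beyond its existence.
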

\begin{proof}
Let us first consider a function $w \in H^1(\Omega)$ satisfying 
    \begin{align}\label{eq_homog}
\Delta w = 0, 
\quad
Q w = 0,
    \end{align}
and write 
    \begin{align*}
\delta = \norm{P w}_{H^{1/2}(\p \Omega)}, \quad \eta = \norm{w}_{L^2(\omega)}, \quad E = \norm{w}_{H^1(\Omega)}.
    \end{align*}
Then on $\p \Omega$ it holds that $w = P w + Q w = P w$ and the standard energy estimate for the Laplacian gives
$E \lesssim \delta$. The continuity of trace for harmonic functions \cite[Th. 1.5.3.4]{grisvard1985} implies, for $\epsilon>0$ 
    \begin{align}\label{eq:weak_trace}
\norm{w}_{H^{-1/2-\epsilon}(\p \Omega)}
\lesssim 
\norm{w}_{L^2(\Omega)}.
    \end{align}
Further, the stability estimate for elliptic unique continuation, see e.g. \cite[Th. 5.3]{alessandrini2009}, reads
    \begin{align*}
\norm{w}_{L^2(\Omega)} 
\le E \mu \left( \frac{\eta}{E} \right),
    \end{align*}
where $\mu$ is a logarithmic modulus of continuity,
\[
\mu(t)\leq \frac{C}{\log(\frac{1}{t})^{\alpha}}, \mbox{ for } t<1 \mbox{ and some } \alpha \in (0,1).
\]

As all norms are equivalent in the finite dimensional range of $P$, combining these three estimates yields
    \begin{align*}
\delta 
\lesssim 
\norm{Pw}_{H^{-1/2-\epsilon}(\p \Omega)} 
\lesssim 
\norm{w}_{L^2(\Omega)}
\le
E \mu \left( \frac{\eta}{E} \right)
\lesssim \delta \mu\left( \frac{\eta}{E} \right).
    \end{align*}
Cancelling out $\delta$ and applying $\mu^{-1}$ gives
    \begin{align*}
1 \lesssim \frac{\eta}{E},
    \end{align*}
which again implies $E \lesssim \eta$.
We have shown \eqref{continuum_est} for functions $w$ satisfying \eqref{eq_homog}.

Let us now consider an arbitrary $u \in H^1(\Omega)$ and let $v \in H^1(\Omega)$ be the solution of 
    \begin{align*}
\Delta v = \Delta u, \quad v|_{\p \Omega} = Q u.
    \end{align*}
Then $w = u - v$ satisfies \eqref{eq_homog} and 
    \begin{align*}
\norm{u}_{H^1(\Omega)} 
&\le
\norm{w}_{H^1(\Omega)} 
+ \norm{v}_{H^1(\Omega)} 
\lesssim 
\norm{w}_{L^2(\omega)} + \norm{v}_{H^1(\Omega)} 
\lesssim
\norm{u}_{L^2(\omega)} + \norm{v}_{H^1(\Omega)}.
    \end{align*}
Now \eqref{continuum_est} follows from the energy estimate for the Laplacian.
\end{proof}
\begin{remark}
The discussion is restricted to two space dimensions since the bound \eqref{eq:weak_trace} on polygonal domains is only shown in this case in \cite{grisvard1985}. For smooth domains the inequality is known to hold also in higher dimensions \cite[Th. B.2.9 and B.2.7]{hormander2007}. In that case the extension to higher dimensions of the below analysis is straightforward.
\end{remark}
\section{Finite element method}\label{sec:FEM}

We will use the stability estimate \eqref{continuum_est} to analyse  a finite element method for approximation of $u$ satisfying
\begin{equation}\label{eq:PDE_UC}
\begin{array}{rcl}
-\Delta u &=& f \mbox{ in } \Omega, \\
u & = & q  \mbox{ in } \omega, \\
u\vert_{\p \Omega} & \in & \VN.
\end{array}
\end{equation}
Here it is assumed that a solution exists, which is clearly not true for all data $(f, q)$.
The unique continuation problem formulated in Section \ref{sed:setting} corresponds to the case $f=0$.

We let $\mcT$ be a decomposition of $\Omega$ into shape regular simplices $K$ that form a simplicial complex. We let $h_K = \mbox{diam}(K)$ be the local mesh parameter and write $h = \max_{K \in \mcT} h_K$ for the global one. Then we assume that a family of such tesselations $\{\mcT\}_h$ is quasi-uniform. On $\mcT$ we define the standard space of continuous finite element functions
    \begin{align}\label{def_Vh}
V_h = \{v \in H^1(\Omega) : v \vert_K \in \P_k \text{ for $K \in \mcT$} \}.
    \end{align}
Here $\P_k$ is the space of polynomial of degree at most $k \ge 1$ on $K$. Further, we write
    \begin{align*}
V_{h0} = V_h \cap H^1_0(\Omega).
    \end{align*}

The discrete inverse inequality \cite[Lemma 1.138]{ern2004} reads
    \begin{align}\label{ineq_inverse}
\norm{h \nabla u}_{L^2(K)}
\lesssim
\norm{u}_{L^2(K)},
\quad u \in \mathbb P_k.
    \end{align}
The implicit constant above does not depend on $h > 0$. The same will be true for all subsequent inequalities. 
It follows from \eqref{ineq_inverse} that for all integers $m \ge l \ge 0$
    \begin{align}\label{ineq_semicl_inv}
\seminorm{u}_{H^m(\mathcal T_h)} 
\lesssim 
\seminorm{u}_{H^l(\mathcal T_h)},
\quad u \in V_h,
    \end{align}
where the broken semiclassical Sobolev seminorms
    \begin{align*}
\seminorm{u}_{H^m(\mathcal T_h)}^2 
= 
\sum_{K \in \mathcal T_h} \norm{(hD)^m u}_{L^2(K)}^2
    \end{align*}
are used. We write, further, 
    \begin{align*}
H^m(\mathcal T_h) = \{u \in L^2(\Omega) : u|_K \in H^m(K)\},
\quad 
\norm{u}_{H^m(\mathcal T_h)}^2 
= 
\sum_{j=0}^m \seminorm{u}_{H^j(\mathcal T_h)}^2.
    \end{align*}

Consider the bilinear form
    \begin{align*}
a(u,z) = (h\nabla u, h\nabla z)_{L^2(\Omega)},\quad  u,z \in H^1(\Omega),
    \end{align*}
and write $A$ for the corresponding quadratic form. This lower and upper case notation convention is used for all bilinear and quadratic forms in the paper. 
The bilinear form $a$ is scaled so that
    \begin{align*}
A(u) \lesssim \seminorm{u}_{H^1(\mathcal T_h)}^2, \quad u \in H^1(\Omega).
    \end{align*}
In particular, in view of \eqref{ineq_semicl_inv},
    \begin{align}
A(u) \lesssim \norm{u}_{L^2(\Omega)}^2, \quad u \in V_h.
    \end{align}
We use this type of ``semiclassical'' scaling systematically throughout the paper. 

Writing $W_h = V_h \times \mathcal V_N \times V_{h0}$,
our finite element method is given by the Euler--Lagrange equations for the Lagrangian $\mathcal L : W_h \to \R$,  
    \begin{align}
\mathcal L(u, y, z) 
&= 
h^2 \frac12  \norm{u - q}_{L^2(\omega)}^2 
+ \frac12 B(u, y)
+ a(u, z) - h^2(f,z)_{L^2(\Omega)}
\\&\qquad
+ \frac12 J(u) + \frac12 \seminorm{h^2 \Delta u + h^2 f}_{H^0(\mathcal T_h)}^2,
    \end{align}
where
    \begin{align}
B(u,y) 
&= 
h \norm{u - y}_{L^2(\p \Omega)}^2 + h \norm{h \nabla_\p(u - y)}_{L^2(\p \Omega)}^2,
\\
J(u) 
&= \sum_{K \in \mathcal T_h} h \|\jump{h \nabla u}\|_{L^2(\partial K \setminus \p \Omega)}^2.
    \end{align}
Here $\nabla_\p$ is the gradient on $\p \Omega$ and $\jump{\cdot}$ is the jump across element faces.  
The first two terms in $\mathcal L$ can be viewed as imposing the conditions 
    \begin{align}
u|_\omega = q, \quad u|_{\p \Omega} \in \mathcal V_N,
    \end{align}
and the next two terms the equation $-\Delta u = f$, cf. \eqref{eq:PDE_UC}. The last two terms are auxiliary regularization (or stabilization) terms. Their choice is motivated by Lemma~\ref{lem_jump} below. 

The trace inequality with scaling \cite[Eq. 10.3.8]{brenner2008}
    \begin{align}\label{ineq_trace}
h^{1/2} \norm{u}_{L^2(\p K)}
\lesssim  
\norm{u}_{L^2(K)}
+ \norm{h \nabla u}_{L^2(K)},
\quad u \in H^1(K),\ K \in \mathcal T_h,
    \end{align}
implies that 
    \begin{align}
B(u,0) \lesssim \norm{u}_{H^2(\mathcal T_h)}^2, 
\quad u \in V_h + H^2(\Omega),
    \end{align} 
and for
    \begin{align}
S(u) = J(u) + \seminorm{h^2 \Delta u}_{H^0(\mathcal T_h)}^2,
    \end{align}
    \begin{align}\label{S_bound}
S(u) 
\lesssim 
\seminorm{u}_{H^2(\mathcal T_h)}^2 + 
\seminorm{u}_{H^1(\mathcal T_h)}^2,
\quad u \in V_h + H^2(\Omega).
    \end{align}

\begin{lemma}\label{lem_jump}
For $u \in V_h + H^2(\Omega)$ and $z \in H_0^1(\Omega)$ there holds
    \begin{align}
a(u,z) \lesssim S^{1/2}(u) \norm{z}_{H^1(\mathcal T_h)}.
    \end{align}
\end{lemma}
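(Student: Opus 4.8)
The plan is to integrate by parts element by element. For $u \in V_h + H^2(\Omega)$ one has $u|_K \in H^2(K)$ for every $K \in \mathcal T_h$, so $\Delta u|_K \in L^2(K)$ and $u$ has a well-defined normal trace on $\partial K$; since moreover $z|_K \in H^1(K)$, Green's formula on each $K$ yields
\[
a(u,z) = \sum_{K \in \mathcal T_h}\Bigl( -(h^2 \Delta u,\, z)_{L^2(K)} + h\,(h\,\partial_{n} u,\, z)_{L^2(\partial K)} \Bigr),
\]
where $\partial_{n}$ denotes the derivative along the outward unit normal of $K$. Since $z \in H^1_0(\Omega)$ vanishes on $\p\Omega$, the faces lying on $\p\Omega$ do not contribute to the boundary sum, and, $z$ being single-valued across interior faces, the two contributions produced by each interior face $F$ combine into a pairing of the jump $\jump{h\nabla u}$ with $z$. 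This gives the identity
\[
a(u,z) = -\sum_{K \in \mathcal T_h}(h^2 \Delta u,\, z)_{L^2(K)} + \sum_{F} h\,(\jump{h\nabla u}\cdot n_F,\, z)_{L^2(F)},
\]
the last sum running over the interior faces $F$ with $n_F$ a fixed choice of unit normal; for $u \in H^2(\Omega)$ the jumps vanish and the formula reduces to $-(h^2\Delta u, z)_{L^2(\Omega)}$, so it holds uniformly over $V_h + H^2(\Omega)$.

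Next I would bound the two terms. For the volume term, the Cauchy--Schwarz inequality gives $\bigl| \sum_K (h^2\Delta u, z)_{L^2(K)} \bigr| \le \seminorm{h^2\Delta u}_{H^0(\mathcal T_h)}\, \norm{z}_{L^2(\Omega)}$. For the face term, the Cauchy--Schwarz inequality on each face, then a discrete Cauchy--Schwarz over $F$, together with the pointwise bound $|\jump{h\nabla u}\cdot n_F| \le |\jump{h\nabla u}|$, yield
\[
\Bigl| \sum_F h\,(\jump{h\nabla u}\cdot n_F,\, z)_{L^2(F)} \Bigr|
\le
\Bigl( \sum_F h\, \norm{\jump{h\nabla u}}_{L^2(F)}^2 \Bigr)^{1/2}
\Bigl( \sum_F h\, \norm{z}_{L^2(F)}^2 \Bigr)^{1/2}.
\]
The first factor equals $(\tfrac12 J(u))^{1/2}$, each interior face being counted twice in the definition of $J$, hence is $\le J(u)^{1/2}$. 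For the second factor I would invoke the scaled trace inequality \eqref{ineq_trace} on each $K$, in the squared form $h\,\norm{z}_{L^2(\partial K)}^2 \lesssim \norm{z}_{L^2(K)}^2 + \norm{h\nabla z}_{L^2(K)}^2$, and sum over $K$ (each interior face appearing at most twice) to get $\sum_F h\,\norm{z}_{L^2(F)}^2 \lesssim \norm{z}_{H^1(\mathcal T_h)}^2$.

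Combining these estimates, and using $\seminorm{h^2\Delta u}_{H^0(\mathcal T_h)} \le S(u)^{1/2}$, $J(u)^{1/2} \le S(u)^{1/2}$ and $\norm{z}_{L^2(\Omega)} \le \norm{z}_{H^1(\mathcal T_h)}$, both terms take the form $C\, S(u)^{1/2}\, \norm{z}_{H^1(\mathcal T_h)}$, which gives the claim (in fact it bounds $|a(u,z)|$). I do not anticipate a genuine obstacle here: the only points requiring some care are the justification of the element-wise Green's formula for $u \in V_h + H^2(\Omega)$, which rests only on piecewise $H^2$ regularity, and the bookkeeping turning the element-boundary integrals into interior-face jump integrals, which is exactly where $z|_{\p\Omega} = 0$ and the single-valuedness of $z$ on interior faces enter.
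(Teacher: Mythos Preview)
Your proof is correct and follows exactly the same approach as the paper's: element-wise integration by parts to produce the broken Laplacian term and the interior-face jump term, followed by Cauchy--Schwarz and the scaled trace inequality \eqref{ineq_trace} to control $h^{1/2}\norm{z}_{L^2(F)}$. You have simply spelled out in more detail the bookkeeping (use of $z|_{\p\Omega}=0$, single-valuedness of $z$, face-counting) that the paper leaves implicit.
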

\begin{proof}
We integrate by parts
    \begin{align}
a(u,z) = (h \nabla u, h \nabla z)_{L^2(\Omega)}
=
-\sum_{K \in \mathcal T_h} \int_K h^2 \Delta u \, z dx + 
\sum_{F\in\mathcal{F}_h} h\int_F \jump{h\p_\nu u} \, z dx,
    \end{align}
where $\mathcal F_h$ is the set of element faces in the interior of $\Omega$.
The claim follows from \eqref{ineq_trace}.
\end{proof}

More explicitly, the finite element method is: 
find $(u, y, z) \in W_h$ such that
for all $(v, \eta, w) \in W_h$ there holds
    \begin{align}
\label{FEM_noelim}
a(u,w) 
&= 
h^2(f,w)_{L^2(\Omega)}, 
\\
a(v, z) + b(u, y, v, 0)  &\\ + h^2 (u, v)_{L^2(\omega)} + s(u, v) 
&= 
h^2 (q, v)_{L^2(\omega)} + h^2(f, h^2 \Delta v)_{H^0(\mathcal T_h)}
\\ 
b(u, y, 0, \eta) 
&= 
0.
    \end{align}
Here, following our notation convention, $b$ is the bilinear form corresponding to the quadratic form $B$. In particular, the third equation reads
    \begin{align}
0 = b(u, y, 0, \eta) = -h (u - y, \eta)_{L^2(\p \Omega)} - h(h\nabla_\p(u - y), h \nabla_\p \eta)_{L^2(\p \Omega)}.
    \end{align}
As this holds for all $\eta \in \mathcal V_N$ we see that $y = P u|_{\p \Omega}$, and hence this variable can be eliminated.
This leads to the formulation: find $(u, z) \in V_h \times V_{h0}$ such that for all $(v,w) \in V_h \times V_{h0}$
\begin{equation}\label{eq:FEM}
g(u,z,v,w) 
= 
h^2 (q, v)_{L^2(\omega)} 
+ h^2(f, h^2 \Delta v)_{H^0(\mathcal T_h)}
+ h^2(f,w)_{L^2(\Omega)}, 
\end{equation}
where the bilinear form $g$ is defined by
    \begin{align}
g(u,z,v,w)  & 
= 
a(u,w) + a(v,z) 
+ \tilde b(u,v) + h^2 (u, v)_{L^2(\omega)} 
+ s(u,v),
    \end{align}
and $\tilde B(u) = B(Qu, 0)$.
Observe that if $u \in H^2(\Omega)$ satisfies \eqref{eq:PDE_UC} then \eqref{eq:FEM} holds for this $u$ and $z=0$.
In other words, the system \eqref{eq:FEM} is consistent. 

We introduce the norm
    \begin{align}
\tnorm{u, z}^2 
= 
\tilde B(u) + h^2 \norm{u}_{L^2(\omega)}^2 + S(u)
+ \seminorm{z}_{H^1(\mathcal T_h)}^2, \quad u \in V_h + H^2(\Omega),\ z \in H^1(\Omega).
    \end{align}
This is indeed a norm in view of \eqref{continuum_est}
and the Poincar\'e inequality.

\begin{lemma}\label{lem:infsup}
There holds 
\[
\tnorm{u,z} \lesssim \sup_{(v,w) \in V_h \times V_{h0}} \frac{g(u,z,v,w)}{\tnorm{w,z}},
\quad (u, z) \in V_h \times V_{h0}.
\]
\end{lemma}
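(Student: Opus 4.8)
The plan is to establish the inf--sup condition by choosing a suitable test pair $(v,w) \in V_h \times V_{h0}$ for which $g(u,z,v,w)$ controls $\tnorm{u,z}^2$ from below. The natural first attempt is to test with $(v,w) = (u,z)$. Plugging in, the symmetric ``diagonal'' terms give $g(u,z,u,z) = a(u,z) + a(u,z) + \tilde B(u) + h^2\norm{u}_{L^2(\omega)}^2 + S(u)$; this already produces three of the four ingredients of $\tnorm{u,z}^2$, but it recovers $\seminorm{z}_{H^1(\mathcal T_h)}^2$ only indirectly (through $2a(u,z) = 2(h\nabla u, h\nabla z)_{L^2(\Omega)}$, which has the wrong sign-definiteness) rather than directly. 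So the diagonal test alone is not enough, and we must add a correction.

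The standard remedy in this ``discretize-then-optimize'' setting is to test additionally with a pair that isolates the adjoint variable. Concretely, I would test with $(v,w) = (u + \delta v_z, z)$ for a well-chosen $v_z \in V_h$ and a small parameter $\delta > 0$; the purpose of $v_z$ is to make the cross term $a(v_z, z)$ reproduce $\seminorm{z}_{H^1(\mathcal T_h)}^2$. A good candidate is $v_z = -z$ itself (legitimate since $V_{h0}\subset V_h$), which contributes $a(-z,z) = -\seminorm{z}_{H^1(\mathcal T_h)}^2$ up to the semiclassical scaling --- note $a(z,z) = \seminorm{z}_{H^1(\mathcal T_h)}^2$ exactly for $z\in H^1(\Omega)$ since $\mcT$ conforms. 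Thus testing with $(u - \delta z, z)$ gives
    \begin{align*}
g(u,z,u-\delta z,z)
= g(u,z,u,z) - \delta\bigl(a(z,w)\big|_{w=z} + \tilde b(u,-z)\cdots\bigr),
    \end{align*}
and more carefully one isolates $-\delta\, a(z,z) = -\delta \seminorm{z}_{H^1(\mathcal T_h)}^2$ along with error terms $\delta\bigl(a(u, \cdot) + \tilde b(u,z) + h^2(u,z)_{L^2(\omega)} + s(u,z)\bigr)$ that must be absorbed. The sign works out if one tests with $(u + \delta z, z)$ versus $(u,z)$ appropriately; the precise combination has to be pinned down so that the ``good'' diagonal terms survive after absorbing the $\delta$-order junk via Young's inequality and the discrete inverse bound \eqref{ineq_semicl_inv}, \eqref{S_bound}, and Lemma~\ref{lem_jump}.

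The key structural input is Lemma~\ref{lem_jump}, which gives $a(u,w) \lesssim S^{1/2}(u)\norm{w}_{H^1(\mathcal T_h)}$ and, by symmetry of $a$, also $a(v,z) \lesssim S^{1/2}(z)\norm{v}_{H^1(\mathcal T_h)}$ --- but more usefully it lets us bound the cross terms involving $u$ against $S^{1/2}(u)\seminorm{z}_{H^1(\mathcal T_h)}$, which is exactly absorbable into $S(u) + \seminorm{z}_{H^1(\mathcal T_h)}^2$. Similarly, $\tilde b(u,z)$, $h^2(u,z)_{L^2(\omega)}$, and $s(u,z)$ are each bounded by the product of the corresponding seminorms of $u$ (pieces of $\tnorm{u,z}^2$) and of $z$, the latter controlled via \eqref{ineq_semicl_inv} by $\seminorm{z}_{H^1(\mathcal T_h)}$ since $z\in V_{h0}$; here I use that $s(\cdot,\cdot)$ and the other stabilizers are bounded on $V_h$ by the $H^2(\mcT)$-norm via \eqref{S_bound} and then by $\seminorm{\cdot}_{H^1(\mcT)}$ via the inverse inequality. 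After choosing $\delta$ small enough (independent of $h$), all these cross terms are absorbed and one obtains $g(u,z,v,w) \gtrsim \tnorm{u,z}^2$ with the chosen $(v,w)$ satisfying $\tnorm{w,z} + \norm{v}_{\star} \lesssim \tnorm{u,z}$ in the appropriate combined norm appearing in the denominator, which yields the claim after dividing.

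I expect the main obstacle to be bookkeeping rather than conceptual: getting the signs right in the combined test function so that $\seminorm{z}_{H^1(\mathcal T_h)}^2$ appears with a \emph{positive} coefficient after all cancellations, and simultaneously verifying that the norm of the test pair $(v,w)$ --- measured in whatever norm makes the stated quotient meaningful --- is bounded by $\tnorm{u,z}$. One subtlety worth flagging: the denominator in the statement is written $\tnorm{w,z}$, i.e.\ the triple-norm of the pair $(w,z)$ with $w \in V_{h0}$ playing the role of the ``$z$-slot''; since $\tnorm{\cdot,\cdot}$ only sees $S$, $\tilde B$, the $\omega$-term of its first argument and the $H^1(\mcT)$-seminorm of its second, and since for the chosen test functions both slots are controlled by $\tnorm{u,z}$, this is consistent --- but one must check that the first argument $w$ of $\tnorm{w,z}$ indeed has $\tilde B(w) + h^2\norm{w}^2_{L^2(\omega)} + S(w) \lesssim \tnorm{u,z}^2$, which follows because $w$ is built from $z \in V_{h0}$ and the $V_h$-correction, using \eqref{S_bound} and the inverse inequality to pass to $\seminorm{z}_{H^1(\mcT)}$.
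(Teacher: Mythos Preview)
Your overall strategy is on the right track, and the idea of adding $\delta z$ in the $v$-slot to produce $\seminorm{z}_{H^1(\mcT)}^2$ is exactly what is needed. However, there is a genuine gap in your choice of the $w$-slot: you keep $w = z$ throughout, and the diagonal test $(v,w)=(u,z)$ then leaves the cross term $2a(u,z)$, which cannot be absorbed uniformly in $h$. Your plan to control it via Lemma~\ref{lem_jump} does not work: that lemma yields $|a(u,z)| \lesssim S^{1/2}(u)\,\norm{z}_{H^1(\mcT)}$ with the \emph{full} broken norm $\norm{z}_{H^1(\mcT)}^2 = \norm{z}_{L^2(\Omega)}^2 + \seminorm{z}_{H^1(\mcT)}^2$, not the seminorm you write. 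For $z \in V_{h0}$ Poincar\'e only gives $\norm{z}_{L^2(\Omega)} \lesssim \norm{\nabla z}_{L^2(\Omega)} = h^{-1}\seminorm{z}_{H^1(\mcT)}$, so you end up with $|a(u,z)| \lesssim h^{-1} S^{1/2}(u)\,\seminorm{z}_{H^1(\mcT)}$, and the $h^{-1}$ destroys any $h$-uniform absorption into $S(u) + \delta\seminorm{z}_{H^1(\mcT)}^2$. A direct Cauchy--Schwarz bound $|a(u,z)| \le \seminorm{u}_{H^1(\mcT)}\seminorm{z}_{H^1(\mcT)}$ fares no better, since $\seminorm{u}_{H^1(\mcT)}$ is not part of $\tnorm{u,0}$.

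The fix is a single sign: take $w=-z$. With $(v,w)=(u,-z)$ the two $a$-terms cancel exactly, $a(u,-z)+a(u,z)=0$, so that $g(u,z,u,-z)=\tnorm{u,0}^2$ on the nose. Then adding $\epsilon z$ in the $v$-slot, i.e.\ testing with $(u+\epsilon z,-z)$, contributes $\epsilon a(z,z)=\epsilon\seminorm{z}_{H^1(\mcT)}^2$ plus the small cross terms $\epsilon h^2(u,z)_{L^2(\omega)}$ and $\epsilon s(u,z)$ (note $\tilde b(u,z)=0$ since $z|_{\partial\Omega}=0$), and \emph{these} are now genuinely absorbable via \eqref{S_bound}, \eqref{ineq_semicl_inv} and Poincar\'e exactly as you sketched. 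This is precisely how the paper proceeds, and your bookkeeping concern about the denominator, $\tnorm{u+\epsilon z,-z}\lesssim\tnorm{u,z}$, is then settled by the same bounds you already identified.
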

\begin{proof}
Taking $(v,w) = (u,-z)$ we have
    \begin{align}
g(u,z,u,-z) = \tnorm{u, 0}^2.
    \end{align}
As $V_{h0} \subset V_h$, we can also take $(v,w) = (z,0)$ and get
    \begin{align}
g(u,z,z,0) = A(z) + h^2(u,z)_{L^2(\omega)} + s(u,z).
    \end{align}
Here we used the vanishing $\tilde b(u,z) = 0$ due to $z|_{\p \Omega} = 0$. It follows from \eqref{S_bound} and \eqref{ineq_semicl_inv} that 
    \begin{align}\label{S_bound2}
S(z) \lesssim \seminorm{z}_{H^1(\mathcal T_h)}^2.
    \end{align} 
Moreover, the Poincar\'e inequality implies that 
    \begin{align}\label{Poincare}
h^2\norm{z}_{L^2(\omega)}^2 \le \seminorm{z}_{H^1(\mathcal T_h)}^2. 
    \end{align} 
We see that 
    \begin{align}
A(z) = \seminorm{z}_{H^1(\mathcal T_h)} \lesssim g(u, z, z, 0) + \tnorm{u, 0}.
    \end{align}
Therefore for $\epsilon >0$ small enough
\[
\tnorm{u,z}^2 \leq g(u,z,u + \epsilon z ,-z).
\]
It remains to observe that the inequality
\[
\tnorm{u + \epsilon z,-z} \lesssim \tnorm{u,z}
\]
follows from $\tilde B(z) = 0$ together with \eqref{S_bound2}--\eqref{Poincare}. 
\end{proof}

It follows from Lemma \ref{lem:infsup} that the linear system \eqref{eq:FEM} admits a unique solution for any $f \in L^2(\Omega)$ and $q \in L^2(\omega)$. Writing $(u_h, z_h) \in V_h \times V_{h0}$ for the unique solution, we have the Galerkin orthogonality
    \begin{align}\label{Gortho}
g(u_h - u, z_h, v, w) = 0, \quad (v, w) \in V_h \times V_{h0},
    \end{align}
due to the consistency of \eqref{eq:FEM}.
Here $u$ solves \eqref{eq:PDE_UC}.
Using the orthogonality and Lemma~\ref{lem:infsup} we derive the following best approximation result in the norm $\tnorm{\cdot,\cdot}$.
\begin{lemma}\label{lem:best_approx}
Suppose that $u \in H^2(\Omega)$ and $(u_h, z_h) \in V_h \times V_{h0}$ solve \eqref{eq:PDE_UC} and \eqref{eq:FEM}, respectively. Then there holds
\[
\tnorm{u_h - u,z_h} \lesssim \inf_{\tilde u \in V_h} (\tnorm{u - \tilde u,0} + \|h\nabla (u - \tilde u)\|_{L^2(\Omega)}).
\]
If, furthermore, $u\in H^{k+1}(\Omega)$, with $k \ge 1$ the polynomial order in the definition \eqref{def_Vh} of $V_h$, then
\[
\tnorm{u - u_h,z_h} \lesssim \seminorm{u}_{H^{k+1}(\mathcal T_h)}.
\]
\end{lemma}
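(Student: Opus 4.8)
The plan is to prove the best approximation result in two stages, exactly mirroring the structure of a classical Céa-type argument adapted to the inf--sup/stabilized setting. First I would fix an arbitrary $\tilde u \in V_h$, set $\xi = u_h - \tilde u \in V_h$ and $\zeta = z_h \in V_{h0}$, and estimate $\tnorm{\xi,\zeta}$ using Lemma~\ref{lem:infsup}: there is a test pair $(v,w) \in V_h \times V_{h0}$ with $\tnorm{w,\zeta} \lesssim 1$ (after normalization) such that $\tnorm{\xi,\zeta} \lesssim g(\xi,\zeta,v,w)$. Then I would use Galerkin orthogonality \eqref{Gortho} to replace $u_h$ by $u$: writing $g(\xi,\zeta,v,w) = g(u_h - u, z_h, v, w) + g(u - \tilde u, 0, v, w) = g(u - \tilde u, 0, v, w)$, so that everything is reduced to bounding $g(u - \tilde u, 0, v, w)$ in terms of $\tnorm{u - \tilde u, 0}$ and the extra term $\|h\nabla(u-\tilde u)\|_{L^2(\Omega)}$.

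The core of the argument is therefore a continuity bound for $g(e, 0, v, w)$ where $e = u - \tilde u \in V_h + H^2(\Omega)$. Inspecting the definition of $g$, there are five contributions: $a(e,w)$, $a(v,0)=0$, $\tilde b(e,v)$, $h^2(e,v)_{L^2(\omega)}$, and $s(e,v)$. The terms $\tilde b(e,v)$ and $h^2(e,v)_{L^2(\omega)}$ are handled by Cauchy--Schwarz directly against the corresponding pieces of $\tnorm{e,0}$ and $\tnorm{v, \cdot}$ (recalling $\tilde B(v) \le \tnorm{v,\cdot}^2$ etc.), and similarly $s(e,v) \le S^{1/2}(e) S^{1/2}(v) \lesssim \tnorm{e,0}\tnorm{v,\cdot}$ using that $S(v) \lesssim \tnorm{v,\cdot}^2$ by the same reasoning as \eqref{S_bound2}. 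The term $a(e,w)$ is the one that does not fit inside $\tnorm{e,0}$, and this is where Lemma~\ref{lem_jump} enters for the part of $e$ in $V_h$: for $\tilde u \in V_h$ one has $a(\tilde u, w) = a(u,w) + a(\tilde u - u, w)$, and $a(u,w) = h^2(f,w)_{L^2(\Omega)} = h^2(-\Delta u, w)_{L^2(\Omega)}$ which reassembles into the stabilization structure; more cleanly, since $u$ solves \eqref{eq:PDE_UC} we already know $g(u,0,v,w) = h^2(q,v)_{L^2(\omega)} + \dots$, so it is really $a(e,w)$ with $e$ smooth enough that needs $\|h\nabla e\|_{L^2(\Omega)}$: a plain Cauchy--Schwarz gives $a(e,w) = (h\nabla e, h\nabla w)_{L^2(\Omega)} \le \|h\nabla e\|_{L^2(\Omega)} \seminorm{w}_{H^1(\mathcal T_h)} \lesssim \|h\nabla e\|_{L^2(\Omega)}\tnorm{w,\cdot}$. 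Collecting all five bounds yields $g(e,0,v,w) \lesssim (\tnorm{u-\tilde u,0} + \|h\nabla(u-\tilde u)\|_{L^2(\Omega)})\,\tnorm{w,\zeta}$, and taking the infimum over $\tilde u \in V_h$ finishes the first inequality after a triangle inequality $\tnorm{u_h - u, z_h} \le \tnorm{u_h - \tilde u, z_h} + \tnorm{\tilde u - u, 0}$.

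For the second, rate-optimal inequality I would choose $\tilde u = \pi_h u$ a quasi-interpolant (Scott--Zhang or Clément type) of $u \in H^{k+1}(\Omega)$ into $V_h$, and invoke the standard interpolation estimates to bound each piece of $\tnorm{u - \pi_h u, 0}$: the bulk terms $\seminorm{u - \pi_h u}_{H^j(\mathcal T_h)}$ for $j \le 2$, the jump term $J(u-\pi_h u)$ (which vanishes on the $\pi_h u$ part by continuity and is controlled on the $u$ part by the trace inequality \eqref{ineq_trace} plus interpolation), the boundary term $\tilde B(u - \pi_h u)$ via the scaled trace inequality, the $L^2(\omega)$ term, and $\|h\nabla(u - \pi_h u)\|_{L^2(\Omega)}$. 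With the semiclassical scaling every term comes out as $O(\seminorm{u}_{H^{k+1}(\mathcal T_h)})$; more precisely each factor of $h$ in the scaling compensates exactly for one order lost to interpolation, so that $\tnorm{u - \pi_h u, 0} + \|h\nabla(u-\pi_h u)\|_{L^2(\Omega)} \lesssim h^{k} \|D^{k+1} u\|_{L^2(\Omega)} = \seminorm{u}_{H^{k+1}(\mathcal T_h)}$.

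The main obstacle I anticipate is bookkeeping the semiclassical scaling consistently across the heterogeneous terms of $\tnorm{\cdot,\cdot}$ — in particular checking that the boundary contributions $h\|u-y\|_{L^2(\p\Omega)}^2 + h\|h\nabla_\p(u-y)\|_{L^2(\p\Omega)}^2$ and the stabilization $S$ scale so that the interpolation error in them is no worse than $\seminorm{u}_{H^{k+1}(\mathcal T_h)}$. This requires the scaled trace inequality \eqref{ineq_trace} applied at the right Sobolev order together with quasi-uniformity, and care that $\tilde B(u-\pi_h u) = B(Q(u-\pi_h u),0)$ does not introduce the (potentially large, $N$-dependent) constants from $\VN$ in a way that breaks the $h$-rate — it does not, since $Q$ is a fixed bounded operator, but the constant absorbed into $\lesssim$ will depend on $N$. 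Everything else is a routine assembly of Cauchy--Schwarz, the inverse inequality \eqref{ineq_semicl_inv}, and Lemma~\ref{lem_jump}.
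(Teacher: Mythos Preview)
Your proposal is correct and follows essentially the same route as the paper's proof: triangle inequality, then inf--sup (Lemma~\ref{lem:infsup}) on $u_h-\tilde u$, then Galerkin orthogonality to reduce to $g(u-\tilde u,0,v,w)$, then a term-by-term Cauchy--Schwarz continuity bound with the extra $\|h\nabla(u-\tilde u)\|_{L^2(\Omega)}$ arising exactly from $a(u-\tilde u,w)$. Two minor cleanups: the normalizing test norm should be $\tnorm{v,w}$ throughout (not $\tnorm{w,\zeta}$), and the bounds $S(v)\le\tnorm{v,w}^2$ and $\tilde B(v)\le\tnorm{v,w}^2$ hold by definition of $\tnorm{\cdot,\cdot}$, so your detours through Lemma~\ref{lem_jump} and \eqref{S_bound2} are unnecessary here.
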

\begin{proof}
Let $\tilde u \in V_h$.
In view of the triangle inequality 
\[
\tnorm{u_h - u,z_h} \le 
\tnorm{u_h - \tilde u,z_h}
+ \tnorm{\tilde u - u,0},
\]
it is enough to bound the first term on right-hand side above.
Lemma \ref{lem:infsup} gives
\[
\tnorm{u_h - \tilde u,z_h} \lesssim \sup_{(v,w) \in V_h \times V_{h0}} \frac{g(u_h - \tilde u,z_h,v,w)}{\tnorm{v,w}}.
\]
The Galerkin orthogonality yields
\[
g(u_h - \tilde u,z_h,v,w) = g(u - \tilde u,0,v,w),
\]
and the first claim follows from 
    \begin{align}
g(u - \tilde u,0,v,w)
\lesssim
(\tnorm{u - \tilde u, 0} + \norm{h\nabla(u - \tilde u)}_{L^2(\Omega)}) \tnorm{v,w},
    \end{align}
which again follows from the Cauchy-Schwarz inequality together with 
    \begin{align}
a(u - \tilde u, w) 
\le 
\norm{h\nabla(u - \tilde u)}_{L^2(\Omega)} \tnorm{0, w}.
    \end{align}
The second claim follows from interpolation estimates. 
\end{proof}

The objective is now to use \eqref{continuum_est} to show an a posteriori error estimate for the solutions of \eqref{eq:FEM} and then convergence in the $H^1$-norm for sufficiently smooth solutions. This analysis relies on the following bound
\begin{lemma}\label{lem:Hminus}
Suppose that $u \in H^2(\Omega)$ and $(u_h, z_h) \in V_h \times V_{h0}$ solve \eqref{eq:PDE_UC} and \eqref{eq:FEM}, respectively. 
Then there holds
\[
h^{-1} \norm{h^2 \Delta (u - u_h)}_{H^{-1}(\Omega)} 
\lesssim S^{1/2}(u - u_h).
\]
\end{lemma}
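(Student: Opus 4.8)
The plan is to work from the $H^{-1}(\Omega)$--$H^1_0(\Omega)$ duality. For any $\phi \in H^1_0(\Omega)$ we have, by the definition of the distributional Laplacian and of the form $a$,
\[
\langle h^2 \Delta(u - u_h), \phi \rangle = -a(u - u_h, \phi),
\]
so it suffices to bound the right-hand side by $C\, h\, S^{1/2}(u - u_h)\, \norm{\phi}_{H^1(\Omega)}$. Estimating $a(u - u_h, \phi)$ directly with Lemma~\ref{lem_jump} yields only the factor $\norm{\phi}_{H^1(\mathcal T_h)} \lesssim \norm{\phi}_{H^1(\Omega)}$, which is one power of $h$ too weak; to recover it I would subtract a quasi-interpolant. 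Let $\phi_h \in V_{h0}$ be a Cl\'ement- or Scott--Zhang-type interpolant of $\phi$; since $\phi \in H^1_0(\Omega)$, it can be chosen to vanish on $\p\Omega$, so indeed $\phi_h \in V_{h0}$.

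The steps, in order, are as follows. \emph{(i)} Choosing $(v,w) = (0, \phi_h)$ in the Galerkin orthogonality \eqref{Gortho} and noting $g(u_h - u, z_h, 0, \phi_h) = a(u_h - u, \phi_h)$ gives $a(u - u_h, \phi_h) = 0$, hence
\[
\langle h^2 \Delta(u - u_h), \phi \rangle = -a(u - u_h, \phi) = -a(u - u_h, \phi - \phi_h).
\]
\emph{(ii)} Since $u - u_h \in V_h + H^2(\Omega)$ and $\phi - \phi_h \in H^1_0(\Omega)$, Lemma~\ref{lem_jump} (applied with $z = \pm(\phi - \phi_h)$) gives
\[
|a(u - u_h, \phi - \phi_h)| \lesssim S^{1/2}(u - u_h)\, \norm{\phi - \phi_h}_{H^1(\mathcal T_h)}.
\]
\emph{(iii)} The standard local approximation and $H^1$-stability bounds for the quasi-interpolant give $\norm{\phi - \phi_h}_{L^2(\Omega)} \lesssim h \norm{\nabla \phi}_{L^2(\Omega)}$ and $\norm{\nabla(\phi - \phi_h)}_{L^2(\Omega)} \lesssim \norm{\nabla \phi}_{L^2(\Omega)}$, whence
\[
\norm{\phi - \phi_h}_{H^1(\mathcal T_h)}^2 = \norm{\phi - \phi_h}_{L^2(\Omega)}^2 + \norm{h \nabla(\phi - \phi_h)}_{L^2(\Omega)}^2 \lesssim h^2 \norm{\phi}_{H^1(\Omega)}^2,
\]
using $h \lesssim 1$ and Poincar\'e. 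Combining \emph{(i)}--\emph{(iii)}, dividing by $\norm{\phi}_{H^1(\Omega)}$ and taking the supremum over $\phi \in H^1_0(\Omega)$ gives $\norm{h^2 \Delta(u - u_h)}_{H^{-1}(\Omega)} \lesssim h\, S^{1/2}(u - u_h)$, which is the claim.

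The step that needs the most care is recognizing that what makes the argument work is the orthogonality $a(u - u_h, w) = 0$ for all $w \in V_{h0}$ — orthogonality against the discrete multiplier space, contained in \eqref{Gortho} — since this is precisely what replaces the generic test function $\phi$ by the small quantity $\phi - \phi_h$ and so produces the extra power of $h$; without it one is left with the weaker bound $\norm{h^2 \Delta(u - u_h)}_{H^{-1}(\Omega)} \lesssim S^{1/2}(u - u_h)$. An equivalent, more explicit route, which also makes transparent why $S$ is built from exactly $J$ and $\seminorm{h^2 \Delta \cdot}_{H^0(\mathcal T_h)}$, is to integrate $-a(u-u_h,\phi-\phi_h)$ by parts element by element: it then splits into a sum of element residuals $h^2 \Delta(u - u_h)|_K$ tested against $\phi - \phi_h$ and a sum of interior-face terms involving $\jump{h \nabla u_h}$ — only jumps of $\nabla u_h$ appear, since $u \in H^2(\Omega)$ forces $\jump{\nabla u} = 0$ on interior faces — and each sum is bounded by $S^{1/2}(u - u_h)$ times $h\, \norm{\phi}_{H^1(\Omega)}$ using the trace inequality \eqref{ineq_trace}, the definitions of $J$ and $S$, and the interpolation estimates for $\phi - \phi_h$; this simply reproduces the proof of Lemma~\ref{lem_jump} in the present setting.
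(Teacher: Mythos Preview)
Your proof is correct and follows essentially the same approach as the paper: use the duality definition of the $H^{-1}$ norm, invoke the Galerkin orthogonality with $v=0$ to replace the test function $\phi$ by $\phi-\phi_h$, apply Lemma~\ref{lem_jump}, and conclude with the interpolation estimate $\norm{\phi-\phi_h}_{H^1(\mathcal T_h)}\lesssim h\norm{\phi}_{H^1(\Omega)}$. Your added remarks on the choice of quasi-interpolant preserving the homogeneous boundary condition and on the explicit integration-by-parts alternative are accurate elaborations, but the core argument is identical to the paper's.
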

\begin{proof}
By definition
\[
\norm{h^2 \Delta (u - u_h)}_{H^{-1}(\Omega)} = \sup_{\substack{w \in H^1_0(\Omega) \\ \|w\|_{H^1}=1}} a(u-u_h,w).
\]
Taking $v = 0$ in the Galerkin orthogonality \eqref{Gortho} 
we have
    \begin{align}
a(u - u_h, \tilde w) = 0, \quad \tilde w \in V_{h0}.
    \end{align}
Hence for $w \in H^1_0(\Omega)$ and $\tilde w \in V_{h0}$
    \begin{align}
a(u - u_h, w) = a(u - u_h, w - \tilde w).
    \end{align}
Lemma \ref{lem_jump} gives
    \begin{align}
a(u - u_h, w - \tilde w) \lesssim S^{1/2}(u - u_h) \norm{w - \tilde w}_{H^1(\mathcal T_h)}.
    \end{align}
Finally, choosing $\tilde w$ as an interpolant of $w$ we have
    \begin{align}
\norm{w - \tilde w}_{H^1(\mathcal T_h)}
\lesssim \seminorm{w}_{H^1(\mathcal T_h)} \le h \norm{w}_{H^1(\Omega)}.
    \end{align}
\end{proof}

\begin{remark}\label{rem_on_lem_Hminus}
The estimate in Lemma \ref{lem:Hminus} holds under the weaker assumption that $u \in H^2(\Omega)$ satisfies 
\begin{equation}
\begin{array}{rcl}
-\Delta u &=& f \mbox{ in } \Omega, \\
u & = & q  \mbox{ in } \omega.
\end{array}
\end{equation}
That is without the constraint of finite dimensionality on the boundary
Indeed, although the Galerkin orthogonality \eqref{Gortho} does not hold for all $v$ and $w$ under this assumption, it holds in the case that $v = 0$, and only this case is used in the proof. 
\end{remark}

\begin{proposition}\label{prop:apost}(A posteriori error estimate)
Suppose that $u \in H^2(\Omega)$ and $(u_h, z_h) \in V_h \times V_{h0}$ solve \eqref{eq:PDE_UC} and \eqref{eq:FEM}, respectively. 
Then there holds
    \begin{align}\label{apost}
h\|u - u_h\|_{H^1(\Omega)} 
\lesssim 
h \|u_h - q\|_\omega + J^{1/2}(u_h) 
+ \tilde B^{1/2}(u_h) 
+ \seminorm{h^2 \Delta  u_h 
+ h^2 f}_{H^0(\mathcal T_h)}.
    \end{align}
\end{proposition}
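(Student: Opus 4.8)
The plan is to apply the continuum stability estimate \eqref{continuum_est} to the error $e = u - u_h$ and then to convert each of its three right-hand side terms into the computable quantities appearing in \eqref{apost}, exploiting that $u$ solves \eqref{eq:PDE_UC} and that $(u_h,z_h)$ solves \eqref{eq:FEM}. Concretely, \eqref{continuum_est} applied to $e$ and multiplied by $h$ gives
\[
h\norm{u-u_h}_{H^1(\Omega)}
\lesssim
h\norm{u-u_h}_{L^2(\omega)}
+ h\norm{Q(u-u_h)}_{H^{1/2}(\p\Omega)}
+ h\norm{\Delta(u-u_h)}_{H^{-1}(\Omega)},
\]
so it remains to estimate the three summands on the right separately.

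The first summand is immediate: since $u = q$ in $\omega$ we have $h\norm{u-u_h}_{L^2(\omega)} = h\norm{u_h-q}_\omega$, the first term of \eqref{apost}. For the second summand I would use that $u|_{\p\Omega} \in \VN$ forces $Qu|_{\p\Omega}=0$, hence $Q(u-u_h)|_{\p\Omega} = -Qu_h|_{\p\Omega}$, and it suffices to bound $h\norm{Qu_h}_{H^{1/2}(\p\Omega)}$ by $\tilde B^{1/2}(u_h)$. Here I apply the interpolation inequality \eqref{sobolev_interp} to $Qu_h$ (which lies in $H^1(\p\Omega)$, as $u_h|_{\p\Omega}$ is a continuous piecewise polynomial and $\VN\subset H^1(\p\Omega)$) with the balanced choice $\epsilon = h^{1/2}$; after squaring and multiplying by $h^2$ this yields
\[
h^2\norm{Qu_h}_{H^{1/2}(\p\Omega)}^2
\lesssim
h^3\norm{Qu_h}_{H^1(\p\Omega)}^2 + h\norm{Qu_h}_{L^2(\p\Omega)}^2
\lesssim
h\norm{Qu_h}_{L^2(\p\Omega)}^2 + h^3\norm{\nabla_\p Qu_h}_{L^2(\p\Omega)}^2,
\]
using $h\lesssim 1$ to absorb the lower-order term; the right-hand side is exactly $B(Qu_h,0) = \tilde B(u_h)$.

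For the third summand, note $\norm{h^2\Delta(u-u_h)}_{H^{-1}(\Omega)} = h^2\norm{\Delta(u-u_h)}_{H^{-1}(\Omega)}$, so Lemma~\ref{lem:Hminus} reads $h\norm{\Delta(u-u_h)}_{H^{-1}(\Omega)} \lesssim S^{1/2}(u-u_h)$. Since $u\in H^2(\Omega)$ the interelement jumps of $h\nabla u$ vanish, giving $J(u-u_h) = J(u_h)$, and $-\Delta u = f$ gives $h^2\Delta(u-u_h) = -(h^2\Delta u_h + h^2 f)$, so $\seminorm{h^2\Delta(u-u_h)}_{H^0(\mathcal T_h)} = \seminorm{h^2\Delta u_h + h^2 f}_{H^0(\mathcal T_h)}$; by definition of $S$ and subadditivity of the square root, $S^{1/2}(u-u_h) \le J^{1/2}(u_h) + \seminorm{h^2\Delta u_h + h^2 f}_{H^0(\mathcal T_h)}$. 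Collecting the three estimates yields \eqref{apost}.

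The proof is largely bookkeeping once \eqref{continuum_est} and Lemma~\ref{lem:Hminus} are available; the one point that needs care is the scaling in the interpolation step for the $H^{1/2}(\p\Omega)$ term. Choosing $\epsilon \asymp h^{1/2}$ (rather than the naive $\epsilon \asymp h$) is what makes the $L^2$ boundary contribution land precisely on the $h\norm{Qu_h}_{L^2(\p\Omega)}^2$ piece of $\tilde B$, while the tangential-gradient contribution then automatically comes equipped with the extra factor $h^2$ it needs. I would flag that as the step to verify most carefully, together with the harmless reliance on $h\lesssim 1$.
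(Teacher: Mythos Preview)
Your proposal is correct and follows essentially the same route as the paper: apply \eqref{continuum_est} to $u-u_h$, identify the $\omega$-term as $\norm{u_h-q}_\omega$, use $Qu|_{\p\Omega}=0$ together with the interpolation inequality \eqref{sobolev_interp} at $\epsilon=h^{1/2}$ to bound the boundary term by $\tilde B^{1/2}(u_h)$, and invoke Lemma~\ref{lem:Hminus} plus $J(u-u_h)=J(u_h)$ and $-\Delta u=f$ for the residual term. The only difference is that you spell out the interpolation bookkeeping in more detail than the paper does; your flagged point about the scaling $\epsilon\asymp h^{1/2}$ is exactly the one delicate step.
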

\begin{proof}
Applying the estimate \eqref{continuum_est} to the second term of the right-hand side we see that
\[
\norm{u-u_h}_{H^1(\Omega)} 
\lesssim
\norm{u - u_h}_{L^2(\omega)}
+ \norm{Q (u - u_h)}_{H^{1/2}(\p \Omega)} 
+ \norm{\Delta (u - u_h)}_{H^{-1}(\Omega)}.
\]
By definition 
\[
\norm{u - u_h}_{L^2(\omega)} = \norm{q - u_h}_{L^2(\omega)},
\]
and using the interpolation inequality \eqref{sobolev_interp} with $\epsilon = h^{1/2}$ 
\[
h \norm{Q (u - u_h)}_{H^{1/2}(\p \Omega)} = h \norm{Q u_h}_{H^{1/2}(\p \Omega)} \lesssim \tilde B^{1/2}(u_h).
\]
Finally we apply Lemma \ref{lem:Hminus} to deduce that
    \begin{align}
h \norm{\Delta (u - u_h)}_{H^{-1}(\Omega)} 
&
\lesssim
J^{1/2}(u - u_h) 
+ \seminorm{h^2\Delta (u - u_h)}_{H^0(\mathcal T_h)} 
\\&
=
J^{1/2}(u_h) 
+ \seminorm{h^2f + h^2\Delta u_h}_{H^0(\mathcal T_h)}.
    \end{align}
\end{proof}

Observe that the right-hand side of \eqref{apost} can be written 
    \begin{align}
h \|u_h - u\|_\omega + J^{1/2}(u_h - u) 
+ \tilde B^{1/2}(u_h - u) 
+ \seminorm{h^2 \Delta  (u_h - u)}_{H^0(\mathcal T_h)}
    \end{align}
and that this is essentially the same as $\tnorm{u_h - u, 0}$.
Combining Lemma \ref{lem:best_approx} and Proposition \ref{prop:apost} gives 

\begin{theorem}\label{th_apriori} (A priori error estimate)
Suppose that $u \in H^2(\Omega)$ and $(u_h, z_h) \in V_h \times V_{h0}$ solve \eqref{eq:PDE_UC} and \eqref{eq:FEM}, respectively. Then there holds
\[
h \norm{u - u_h}_{H^1(\Omega)} \lesssim \inf_{\tilde u \in V_h} (\tnorm{u - \tilde u,0} + \|h\nabla (u - \tilde u)\|_{L^2(\Omega)}).
\]
If, furthermore, $u\in H^{k+1}(\Omega)$, with $k \ge 1$ the polynomial order in the definition \eqref{def_Vh} of $V_h$, then \eqref{conv_main} holds.
\end{theorem}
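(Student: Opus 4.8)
The plan is to obtain Theorem~\ref{th_apriori} by chaining together Proposition~\ref{prop:apost} and Lemma~\ref{lem:best_approx}. The a posteriori estimate \eqref{apost} bounds $h\norm{u-u_h}_{H^1(\Omega)}$ by a sum of computable residual terms evaluated at $u_h$, namely $h\norm{u_h-q}_\omega$, $J^{1/2}(u_h)$, $\tilde B^{1/2}(u_h)$, and $\seminorm{h^2\Delta u_h + h^2 f}_{H^0(\mathcal T_h)}$. The first observation, already noted in the excerpt after Proposition~\ref{prop:apost}, is that because $u$ solves \eqref{eq:PDE_UC} one has $q = u|_\omega$, $\tilde B(u) = B(Qu,0) = 0$ (since $u|_{\p\Omega}\in\VN$ implies $Qu|_{\p\Omega}=0$), and $h^2\Delta u + h^2 f = 0$; moreover $J(u)=0$ since $u\in H^2(\Omega)$ has no jumps in its normal derivative across interior faces. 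Hence each residual term equals the corresponding term evaluated at the difference $u_h - u$, and the right-hand side of \eqref{apost} is, up to constants, exactly $\tnorm{u_h - u, 0}$ — more precisely it is dominated by $\tnorm{u_h - u, z_h}$ since that quantity additionally contains $\seminorm{z_h}_{H^1(\mathcal T_h)}^2$, which only helps.

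First I would therefore write
\[
h\norm{u-u_h}_{H^1(\Omega)} \lesssim \tnorm{u_h - u, 0} \le \tnorm{u_h - u, z_h},
\]
using Proposition~\ref{prop:apost} and the identification of residual terms just described, together with $S(u_h - u) = J(u_h-u) + \seminorm{h^2\Delta(u_h-u)}_{H^0(\mathcal T_h)}^2 = J(u_h) + \seminorm{h^2\Delta u_h + h^2 f}_{H^0(\mathcal T_h)}^2$. Then I would apply Lemma~\ref{lem:best_approx}, whose first conclusion gives
\[
\tnorm{u_h - u, z_h} \lesssim \inf_{\tilde u \in V_h}\big(\tnorm{u - \tilde u, 0} + \norm{h\nabla(u-\tilde u)}_{L^2(\Omega)}\big),
\]
yielding the first displayed inequality of the theorem.

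For the second part, assuming $u \in H^{k+1}(\Omega)$, I would invoke the second conclusion of Lemma~\ref{lem:best_approx}, namely $\tnorm{u - u_h, z_h} \lesssim \seminorm{u}_{H^{k+1}(\mathcal T_h)}$, which follows from choosing $\tilde u$ to be a standard interpolant of $u$ in $V_h$ and applying polynomial interpolation estimates to each term of $\tnorm{\cdot,0}$ together with the semiclassical scaling (each spatial derivative carries a factor $h$, so the $H^j(\mathcal T_h)$ seminorms of $u - \tilde u$ for $j \le k+1$ are all controlled by $h^{k+1}\seminorm{u}_{H^{k+1}}$ of order, and the extra $\norm{h\nabla(u-\tilde u)}$ term is of the same or higher order in $h$). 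Combining with the a posteriori bound above gives $h\norm{u-u_h}_{H^1(\Omega)} \lesssim \seminorm{u}_{H^{k+1}(\mathcal T_h)} \lesssim h^{k+1}\norm{D^{k+1}u}_{L^2(\Omega)}$, and dividing by $h$ produces exactly \eqref{conv_main}.

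The step requiring the most care is bookkeeping the powers of $h$ in the semiclassical norms so that the final estimate comes out with the clean exponent $h^k$ on the right of \eqref{conv_main} after the division by $h$: one must check that $\seminorm{u}_{H^{k+1}(\mathcal T_h)}$, which already contains $k+1$ factors of $h$, reproduces $h^{k+1}\norm{D^{k+1}u}_{L^2(\Omega)}$ under standard interpolation — the lower-order seminorm contributions $\seminorm{u-\tilde u}_{H^j(\mathcal T_h)}$ for $j<k+1$ must be absorbed, which is where quasi-uniformity and the scaled interpolation estimates are used. The only other point needing a sentence of justification is that $J(u) = 0$ for $u \in H^2(\Omega)$: this holds because $u\in H^2(\Omega)$ has an $L^2$ normal derivative trace that is single-valued across interior faces, so all jump terms vanish. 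Everything else is a direct concatenation of the lemmas already proved.
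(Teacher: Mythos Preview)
Your proposal is correct and follows exactly the approach of the paper, which simply states that the theorem follows by combining Proposition~\ref{prop:apost} with Lemma~\ref{lem:best_approx}, after observing (as you do) that the right-hand side of \eqref{apost} coincides with $\tnorm{u_h-u,0}$ because $u$ solves \eqref{eq:PDE_UC}. Your additional bookkeeping on the semiclassical powers of $h$ and the vanishing of $J(u)$ for $u\in H^2(\Omega)$ is accurate and fills in details the paper leaves implicit.
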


\section{Perturbation analysis}\label{sec:Pert}

In many situations additional a priori information is available on the trace of the function. For instance in flow problems the inflow data may be known to be close to a finite set of flow profiles up to smooth perturbations. It is then reasonable to assume that the unknown trace can be approximated to high accuracy as a linear combination of a finite number of known functions. In this section we will show how the above framework can be used to obtain a stable approximation method for this situation. We restrict the discussion to the case that $\Omega$ is convex and $k=1$, corresponding to piecewise affine approximation.

We consider the classical unique continuation problem
\begin{equation}\label{eq:UC_pert}
\begin{array}{rcl}
-\Delta u &=& f \mbox{ in } \Omega, \\
u & = & q   \mbox{ on } \omega.
\end{array}
\end{equation}
Here we assume that $q$ is chosen so that the continuation problem admits a solution. 
We apply the method \eqref{eq:FEM} for the approximation of \eqref{eq:UC_pert}, with the data perturbed by a term $q_\delta$, and call the solution $u_h \in V_h$. The perturbation $q_\delta$ is assumed to satisfy the bound $\|q_\delta\|_{L^2(\omega)} \leq \delta$ for some small $\delta>0$.
In addition, we assume that the trace of the solution $u$ to \eqref{eq:UC_pert} can be approximated by functions in $\VN$. More precisely we assume that there exists $p \in \VN$ such that
    \begin{align}
\|u - p\|_{H^{3/2}(\p \Omega)} \leq \delta.
    \end{align}
We will now sho that if the method \eqref{eq:FEM} is used for the approximation of $u$ then \eqref{conv_main} still holds, up to the perturbation $\delta$. That is, if $(u_h, z_h) \in V_h \times V_{h0}$ is the solution of \eqref{eq:FEM} with $q$ replaced by $q + q_\delta$, then
    \begin{align}\label{conv_pert}
\norm{u - u_h}_{H^1(\Omega)} \lesssim h \norm{D^{2} u}_{L^2(\Omega)} + \delta.
    \end{align}

To show this, we first consider the solution $u^*$ to the problem
    \begin{equation}\label{eq:Pert_prob1}
\begin{array}{rcl}
-\Delta u^* &=& f \mbox{ in } \Omega, \\
u^* & = & p \mbox{ on } \partial \Omega.
\end{array}
\end{equation}
Now let $u^\delta = u - u^*$ and note that $u^\delta$ solves
 \begin{equation}\label{eq:Pert_prob2}
\begin{array}{rcl}
-\Delta u^\delta &=& 0 \mbox{ in } \Omega, \\
u^\delta & = & u - p \mbox{ on } \partial \Omega.
\end{array}
\end{equation}
Elliptic regularity implies
\begin{equation}\label{eq:pert_2}
\|u^\delta\|_{H^2(\Omega)}\lesssim \|u - p\|_{H^{3/2}(\p \Omega)} \lesssim \delta.
\end{equation}
Moreover, since the solution of \eqref{eq:Pert_prob1} has trace in $\VN$ we can apply the method \eqref{eq:FEM} to obtain the approximation $(u_h^*, z_h^*) \in V_h \times V_{h0}$. By Theorem \ref{th_apriori} we know that 
\begin{equation}
\|u^* - u_h^*\|_{H^1(\Omega)} \lesssim h \|D^2 u^*\|_{L^2(\Omega)}.
\end{equation}
Thus
    \begin{align}
\|u^* - u_h^*\|_{H^1(\Omega)} 
\lesssim 
h \|D^2 u\|_{L^2(\Omega)} + 
h \|D^2 u^\delta\|_{L^2(\Omega)}
\le h \|D^2 u\|_{L^2(\Omega)} + h\delta,
    \end{align}
and 
    \begin{align}
\|u - u_h\|_{H^1(\Omega)} 
&\leq 
\|u^\delta\|_{H^1(\Omega)}+\|u^* - u_h^*\|_{H^1(\Omega)}+\|u_h^* - u_h\|_{H^1(\Omega)}
\\&\lesssim
\delta +  h \|D^2 u\|_{L^2(\Omega)} + \|u_h^* - u_h\|_{H^1(\Omega)}.
    \end{align}

We write $e_h = u_h - u_h^*$ and $\eta_h = z_h - z_h^*$, and finish the proof of \eqref{conv_pert} by bounding the $H^1(\Omega)$ norm of $e_h$. We apply Lemma \ref{lem:infsup} to see that
\[
\tnorm{e_h,\eta_h} \lesssim \sup_{(v,w) \in V_h \times V_{h0}} \frac{g(e_h,\eta_h,v,w)}{\tnorm{v,w}}.
\]
Observe that $(e_h, \eta_h)$ satisfies \eqref{eq:FEM} with $q = q_\delta + u^\delta|_\omega$ and $f = 0$.
Therefore using \eqref{eq:FEM} we have that 
$g(e_h,\eta_h,v,w) = h^2(q_\delta + u^\delta,v)_{L^2(\omega)}$
 and hence after application of the Cauchy-Schwarz inequality and \eqref{eq:pert_2} we have
\[
\tnorm{e_h, \eta_h} \lesssim h (\|q_\delta\|_{L^2(\omega)} + \|u^\delta\|_{L^2(\omega)}) \lesssim h \delta.
\]
Applying now \eqref{continuum_est} to $e_h$ we obtain, in view of Remark \ref{rem_on_lem_Hminus},
\[
\|e_h\|_{H^1(\Omega)} \lesssim h^{-1} \tnorm{e_h,0} \lesssim \delta.
\]
We have shown the bound \eqref{conv_pert}.
\begin{remark}
Observe that for the estimate \eqref{conv_main} to be useful, the a priori knowledge can not be based on possible approximation properties of the space $\VN$ alone. Even if $\delta$ can be made smaller by choosing $N$ larger through approximation, the implicit constant in \eqref{conv_pert} grows in $N$. So even if $\delta$ could be made to decay, the discretization error part may blow up. 
\end{remark}

\section{Numerical results}\label{sec:Num}

Our computational implementation of the method is based on the formulation \eqref{FEM_noelim}, where the variable $y \in \mathcal V_N$ has not been eliminated. 
According to our computational examples, the stabilizing bilinear form $s$ in \eqref{FEM_noelim} is actually not needed -- the method appears to converge with the optimal rate even without it. We leave it as a future work to understand if $s$ can indeed be omitted. 

In all the computational examples below, we take $\Omega = [0,1]^2$ and let $\omega$ be the region touching the left, bottom, and right sides of the unit square given by
    \begin{align}
\omega = \{ (x,y) \in \Omega : \text{$x < 1/10$ or $x > 9/10$ or $y < 1/4$}\}.
    \end{align}
We write $\Gamma \subset \p \Omega$ for the top edge of the unit square, and let $\mathcal V_N$ be the subspace of $H^1(\partial \Omega)$ that is spanned by the functions $\phi_n$, $n=1,\dots,N$,
where $\phi_n$ is $\sqrt{2} \sin(n\pi x)$ on $\Gamma$ and vanishes on $\p \Omega \setminus \Gamma$.
Further, the polynomial order $k$ of the finite element spaces $V_k$, see \eqref{def_Vh}, is one. 

To see the effect of the stabilizing term $s$ in the computations, we rescale it by a constant $\gamma \ge 0$. As long as $\gamma > 0$, this makes no theoretical difference.
The $H^1(\Omega)$ error for the manufactured solution 
    \begin{align}
\label{u_simple}
u(x, y) = y \sin{\left(\pi x \right)}
    \end{align}
as a function of the mesh parameter $h$ is shown for different values of $\gamma$ in Figure~\ref{fig_7ada3d8}. We have chosen $N = 5$. Observe that $u|_\Gamma \in \mathcal V_1$. 
We take $\gamma = 0$ in the subsequent examples, since the computational results improve as $\gamma \to 0$.

\begin{figure}
\centering
\includegraphics[width=0.7\textwidth]{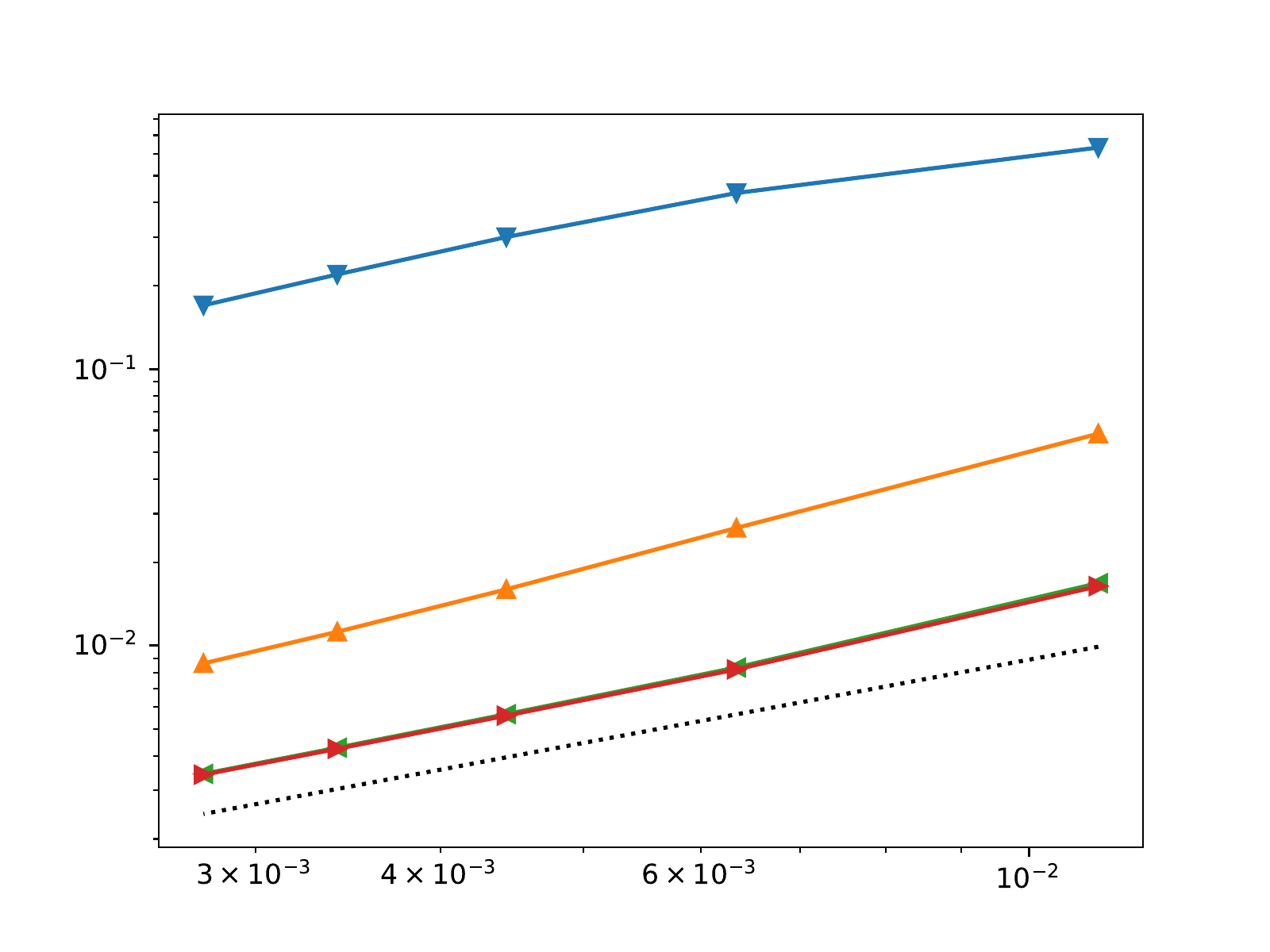}
\caption{The error $\norm{u - u_h}_{H^1(\Omega)}$ as a function of mesh size $h$. Here
$\gamma = 1, 10^{-2}, 10^{-4}, 0$ with triangles pointing down, up, left, and right, respectively. 
Reference rate $h$ in dashed, as predicted by \eqref{conv_main} with $k=1$.}
\label{fig_7ada3d8}
\end{figure}

Let us now illustrate \eqref{conv_pert}, that is, the error estimate with perturbations.
We take 
    \begin{align}
\tilde u(x,y) = y \sin{\left(\pi x \right)} + \frac{1}{100} y \sin{\left(2 \pi x \right)}.
    \end{align}
Then $\tilde u|_{\p \Omega} \in \mathcal V_2$.
Figure \ref{fig_018855c} shows that the convergence stalls for $N = 1$ but not for $N = 2$, in line with the theory.

\begin{figure}
\centering
\includegraphics[width=0.7\textwidth]{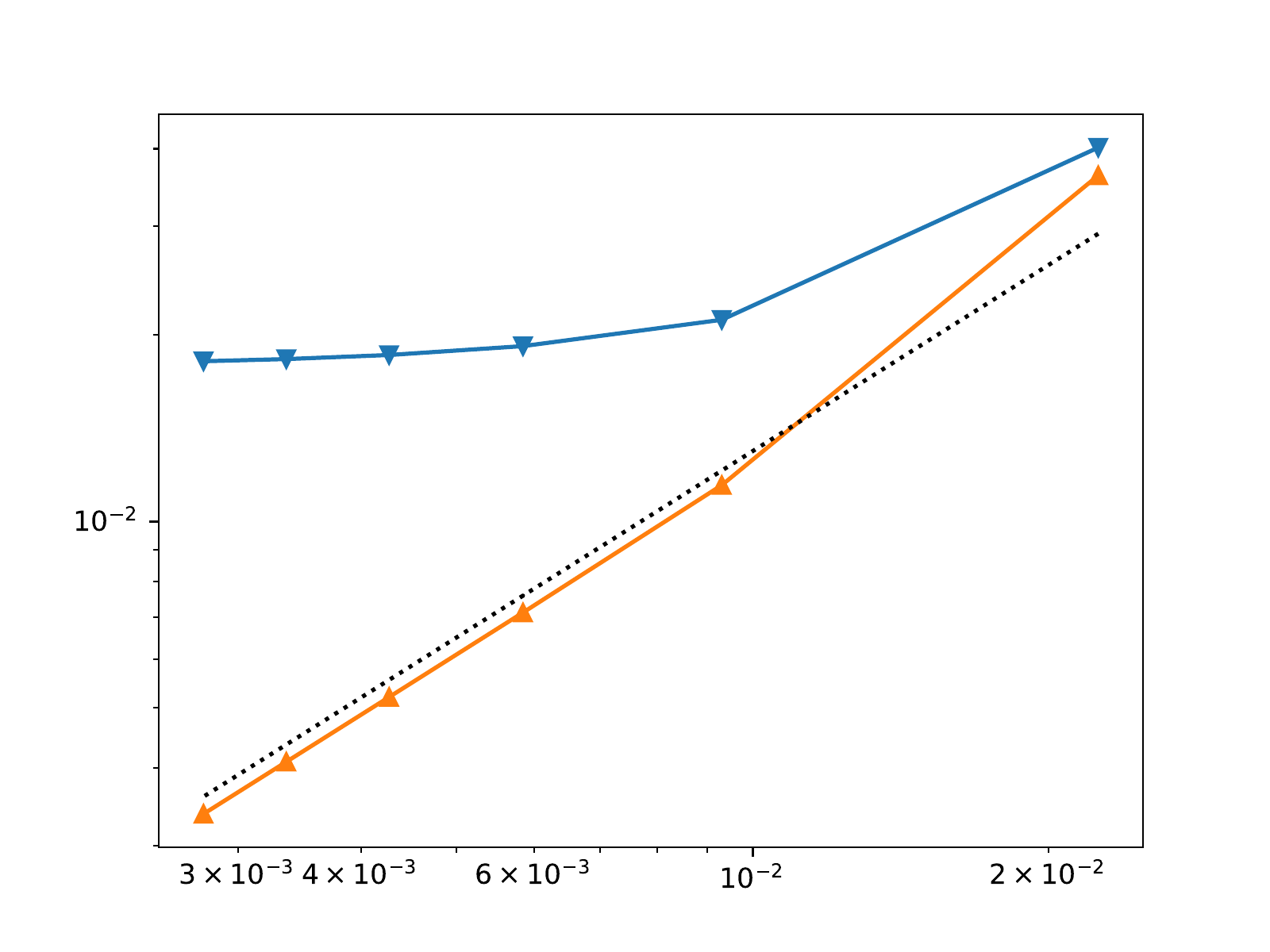}
\caption{
The error $\norm{\tilde u - \tilde u_h}_{H^1(\Omega)}$ as a function of mesh size $h$. 
Here $N = 1, 2$ with triangles pointing down and up, respectively. 
Reference rate $h$ in dashed.}
\label{fig_018855c}
\end{figure}

Our final computational example studies the effect of increasing the number $N$ of degrees of freedom on the boundary.
It appears that merely increasing $N$ does not affect the results if the solution lies in a fixed $\mathcal V_{N_0}$ with $N_0 \le N$. We don't have a theoretical explanation for this, better than expected, phenomenon.
We will compare this case to the case where the exact solution depends on $N$. 

To this end, let us define the ratio
    \begin{align}
C(u) = \frac{\norm{u - u_{h}}_{H^1(\Omega)}}{h \norm{u}_{H^2(\Omega)}},
    \end{align}
and consider the functions
    \begin{align}
\label{u_N}
u_N(x,y) = y \sin{\left(N \pi x \right)}, \quad N=1,2,3,4.
    \end{align}
Then $u_N|_{\p \Omega} \in \mathcal V_N$.
The $H^1(\Omega)$ errors for the functions $u_N$ are plotted in Figure~\ref{fig_bc00789}.
Moreover, the ratio $C(u)$ for $u$ as in \eqref{u_simple},
as well as for $u_N$, is shown in Figure \ref{fig_38d4371} as a function of $N$.
For any $u$, the ratio $C(u)$ gives a lower bound for the constant $C > 0$ in \eqref{conv_main} with $k=1$. We see that the constant grows as a function of $N$, as expected.

\begin{figure}
\centering
\includegraphics[width=0.7\textwidth]{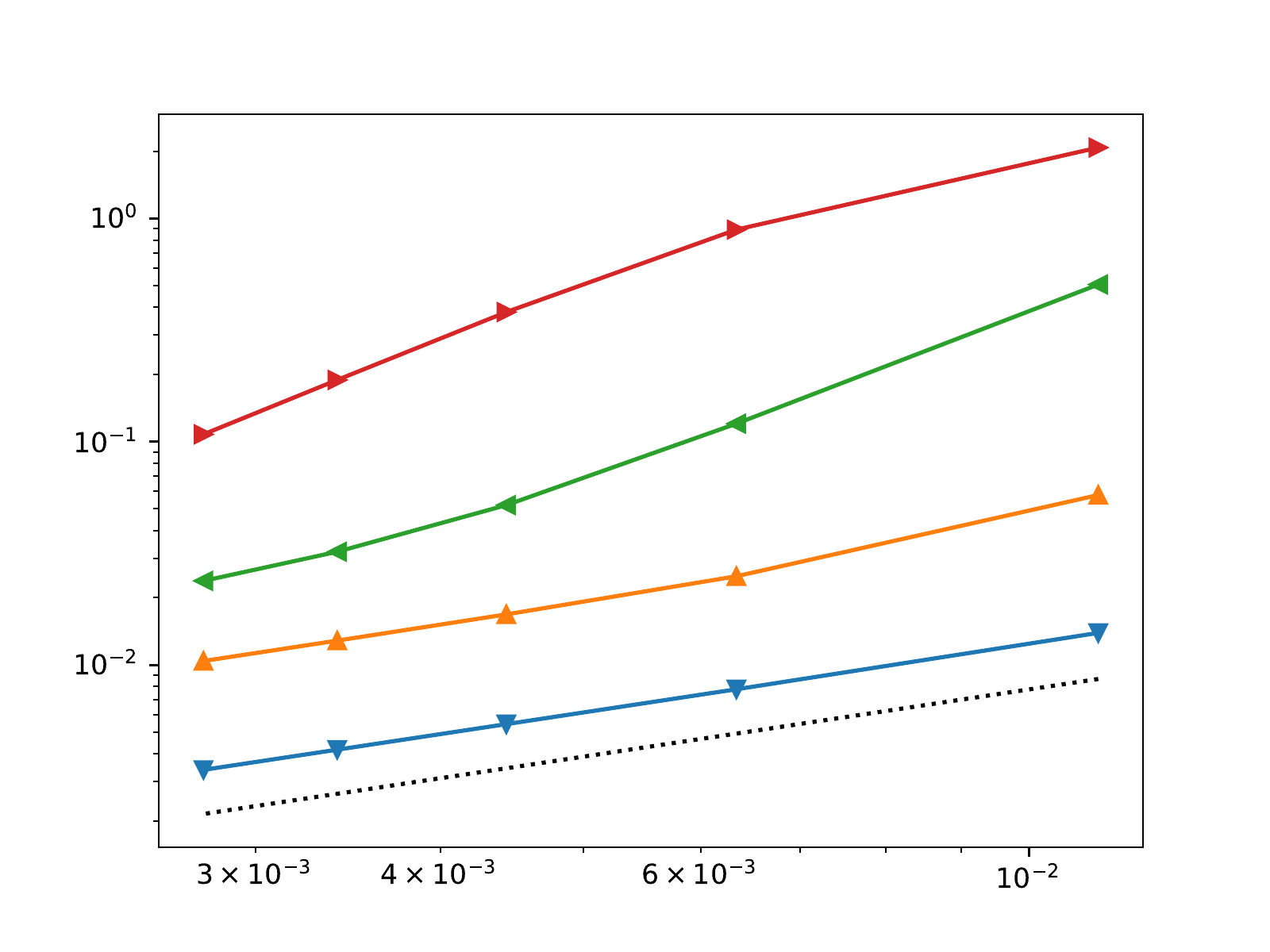}
\caption{The error $\norm{u_N - u_{Nh}}_{H^1(\Omega)}$ error as a function of mesh size $h$.
Here $N = 1, 2, 3, 4$ with triangles pointing down, up, left, and right, respectively. 
Reference rate $h$ in dashed.}
\label{fig_bc00789}
\end{figure}

\begin{figure}
\centering
\includegraphics[width=0.7\textwidth]{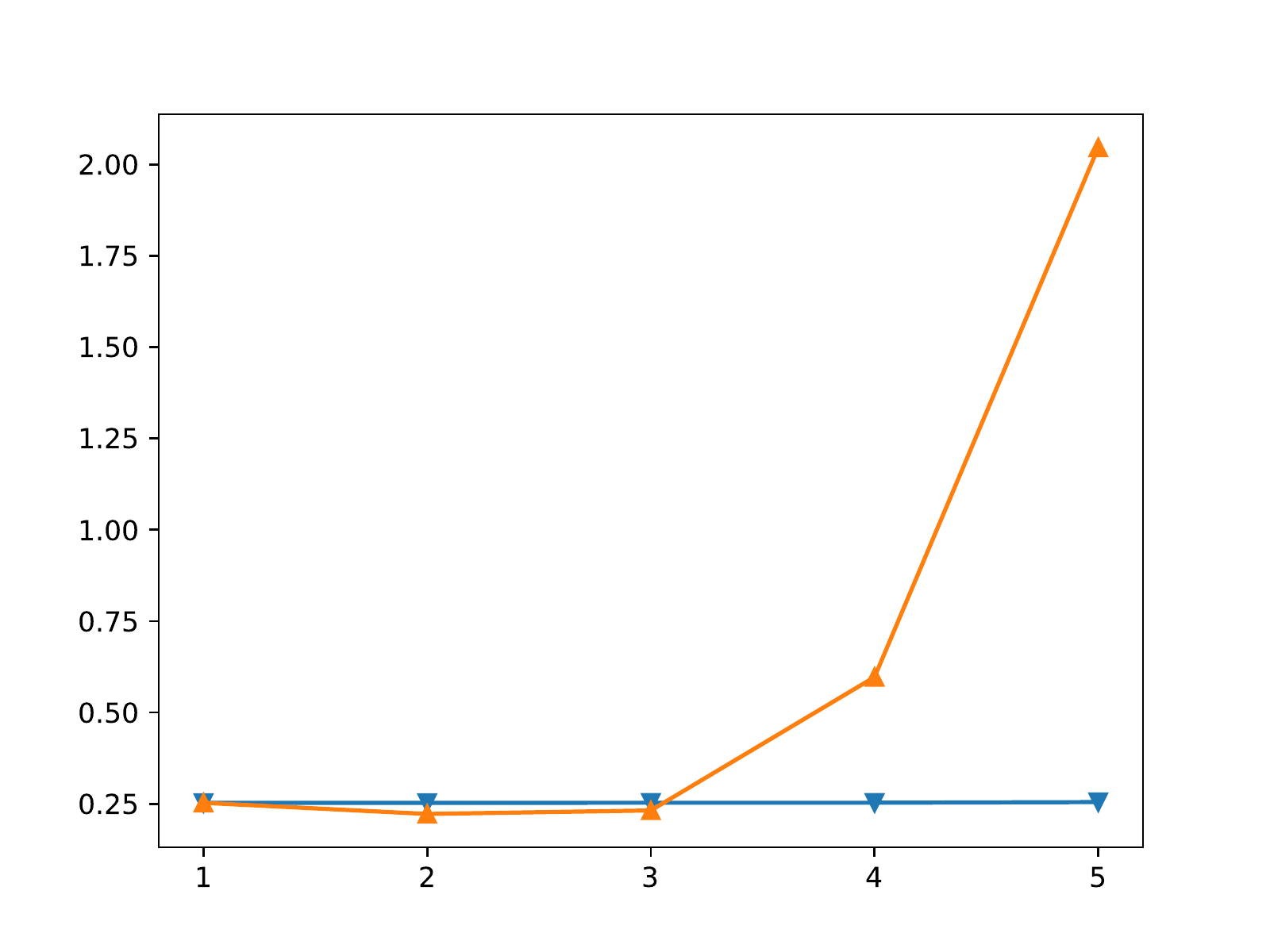}
\caption{The ratio $C(u)$ as a function of $N$. Here $u$ is as in \eqref{u_simple} and \eqref{u_N} with triangles pointing down and up, respectively.}
\label{fig_38d4371}
\end{figure}

\section*{Acknowledgement}
EB acknowledges funding from EPSRC grants EP/T033126/1 and EP/V050400/1. LO acknowledges funding from Academy of Finland grants no. 347715 and 353096

\bibliographystyle{abbrv}
\bibliography{master}
\ifoptionfinal{}{
}
\end{document}